\documentclass[a4paper,12pt]{article}
\usepackage{geometry,latexsym,amssymb}
\usepackage[latin5]{inputenc}
\usepackage{enumerate}
\usepackage{enumitem}
\usepackage[usenames]{color}
\usepackage{graphicx}
\usepackage{enumerate}
\usepackage{rotating}
\usepackage{multirow}
\usepackage{cite}
\usepackage{amsthm}
\usepackage{tikz}
\usetikzlibrary{matrix,arrows}
\usepackage{amsmath,amscd}
\usepackage{calc}
\usepackage{ifthen}
\usepackage[hang,labelsep=period]{caption}
\usepackage{fancyhdr}

%
%
%
 \newtheorem{thm}{Theorem}[section]
 \newtheorem{cor}[thm]{Corollary}
 \newtheorem{lem}[thm]{Lemma}
 
 \theoremstyle{definition}
 
 \theoremstyle{remark}
 
 \newtheorem*{ex}{Example}
 \numberwithin{equation}{section}

\begin{document}

\begin{center}
{\Large \textbf{A NEW APPROACH TO A THEOREM OF ENG}} \vspace*{0.5cm}
\end{center}

\begin{center}
HASAN ARSLAN$^{*}$ \\
$^{*}${\small {\textit{Department of  Mathematics, Faculty of Science, Erciyes University, 38039, Kayseri, Turkey}}}\\
{\small {\textit{E-mail: hasanarslan@erciyes.edu.tr}}}\\[0pt]
\end{center}
\vspace*{0.3cm}

ABSTRACT. The main aim of this work is to give a case-free algebraic proof for a theorem of Eng on the Poincar\'e polynomial of parabolic quotients of finite Coxeter groups evaluated at -1. \\ \newline
\textit{Mathematics Subject Classification (2010)}: 20F55.\\
\textit{Key words:} Coxeter groups, Descent algebras, Poincar\'e polynomial, the longest element, $q=-1$ phenomenon.

\vspace*{0.3cm}

\section{Introduction}

Let $(W,S)$ be a finite Coxeter system. For any subset $J$ of $S$, if the simple reflections in $J$ generate $W_{J}$, then $W_{J}$ is called \textit{a standard parabolic subgroup} of $W$. Denote by $X_{J}$ the set of distinguished left coset representatives $W_{J}$ in $W$ and let $w_{0}$ be the longest element of $W$. We use the same notations $P_{J}$ and $N_{J}$ as in \cite {br8} for the number of elements in $X_{J}$ of even and odd length, respectively and let $D_{J}=P_{J}-N_{J}$. In [\cite{br2}, Theorem 1], O. Eng proved the following theorem about -1 phenomenon.
\vspace*{0.3cm}

\begin{thm}\label{Eng}
For any $J \subseteq S$, we have
\begin{equation}\label{-1 phnmn}
    \sum_{w \in X_{J}} (-1)^{l(w)}= |\{ w \in X_{J}: w_{0}wW_{J}=wW_{J} \}|,
\end{equation}
where $l$ denotes the length function on $W$.
\end{thm}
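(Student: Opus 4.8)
The plan is to read both sides of \eqref{-1 phnmn} as values of two ring homomorphisms on Solomon's descent algebra $\Sigma(W)\subseteq\mathbb{Z}W$, and then to prove the identity by showing that these homomorphisms coincide. Write $y_J=\sum_{w\in X_J}w$ for the basis element of $\Sigma(W)$ attached to $J$, and let $\varepsilon\colon \mathbb{Z}W\to\mathbb{Z}$ be the sign homomorphism $w\mapsto(-1)^{l(w)}$, which is a ring homomorphism because $w\mapsto(-1)^{l(w)}$ is a linear character of $W$. Then the left-hand side of \eqref{-1 phnmn} is exactly $\varepsilon(y_J)$. For the right-hand side I first record that the map $\phi\colon X_J\to X_J$ sending $x$ to the minimal representative of $w_0xW_J$ is an involution whose fixed set is precisely $\{x\in X_J: w_0xW_J=xW_J\}$; hence the right-hand side equals $\#\operatorname{Fix}(\phi)$, which is the number of cosets in $W/W_J$ fixed by left multiplication by $w_0$, i.e. $\pi_J(w_0)$, where $\pi_J=\operatorname{Ind}_{W_J}^{W}\mathbf{1}$ is the coset permutation character. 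Since Solomon's homomorphism $\theta\colon\Sigma(W)\to R(W)$ into the character ring satisfies $\theta(y_J)=\pi_J$, and evaluation $\operatorname{ev}_{w_0}$ at $w_0$ is a ring homomorphism $R(W)\to\mathbb{Z}$, the right-hand side equals $(\operatorname{ev}_{w_0}\circ\theta)(y_J)$. Thus \eqref{-1 phnmn} asserts that the two ring homomorphisms $\varepsilon|_{\Sigma(W)}$ and $\operatorname{ev}_{w_0}\circ\theta$ agree on every $y_J$.

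To compute the left-hand homomorphism explicitly, I would use the length-additive factorization $W=X_J\cdot W_J$, which gives $W(q)=X_J(q)\,W_J(q)$ for the Poincar\'e polynomials, whence
\[
 X_J(q)=\sum_{w\in X_J}q^{l(w)}=\frac{W(q)}{W_J(q)}=\frac{\prod_{i=1}^{n}(1+q+\cdots+q^{d_i-1})}{\prod_{j=1}^{k}(1+q+\cdots+q^{e_j-1})},
\]
where $d_1,\dots,d_n$ and $e_1,\dots,e_k$ are the degrees of $W$ and of $W_J$. In particular $X_J(q)$ is palindromic of degree $l(w_0)-l(w_{0,J})$, a fact also visible from $\phi$ since $l(\phi(x))=l(w_0)-l(w_{0,J})-l(x)$. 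Evaluating at $q=-1$ factor by factor, each cyclotomic factor $1+q+\cdots+q^{d-1}$ contributes $0$ if $d$ is even and $1$ if $d$ is odd; comparing the orders of vanishing of numerator and denominator at $q=-1$ then yields $\varepsilon(y_J)=X_J(-1)=0$ when $W$ has strictly more even degrees than $W_J$, and $\varepsilon(y_J)=\prod_{d_i\text{ even}}(d_i/2)\big/\prod_{e_j\text{ even}}(e_j/2)$ otherwise.

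The conceptual gain is that, because $\Sigma(W)\otimes\mathbb{Q}$ modulo its radical is a split commutative semisimple algebra, its algebra homomorphisms to $\mathbb{Q}$ are exactly the finitely many linear characters $\lambda_{[K]}$ indexed by the $W$-conjugacy classes of parabolic subgroups $W_K$. Both $\varepsilon|_{\Sigma(W)}$ and $\operatorname{ev}_{w_0}\circ\theta$ are among these, so it suffices to identify the single index attached to each and check that the indices coincide. For $\operatorname{ev}_{w_0}\circ\theta$ the index is governed by the parabolic closure of $w_0$, the smallest parabolic containing $w_0$, equivalently the pointwise stabiliser of the fixed space $V^{w_0}$: indeed $g^{-1}w_0g\in W_J$ holds if and only if $g^{-1}P(w_0)g\subseteq W_J$, so $\operatorname{ev}_{w_0}\circ\theta=\lambda_{[P(w_0)]}$. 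The task then reduces to showing that $\varepsilon|_{\Sigma(W)}$ is the linear character indexed by that same class, for which it is enough to verify agreement on a family that spans $\Sigma(W)$, e.g. on the maximal-parabolic elements $y_{S\setminus\{s\}}$.

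The step I expect to be the main obstacle is exactly this last identification: matching the permutation-character value $\pi_J(w_0)$ with the degree formula for $X_J(-1)$. The difficulty is that no soft argument does it. The naive $\phi$-pairing fails because a $w_0$-fixed coset need not have an even-length minimal representative; already in type $A_2$ with $J=\{s_1\}$ the unique fixed coset is $s_2W_J$, of odd length, yet both sides equal $1$, and there is likewise no global sign-reversing involution on $X_J$. I therefore expect the heart of the proof to be a case-free computation of $\pi_J(w_0)$ in terms of the even degrees of $W$ and of $W_J$, either through the decomposition $\pi_J=\sum_\chi(\dim\chi^{W_J})\,\chi$ together with the values $\chi(w_0)$, or through the eigenspace count $\pi_J(w_0)=\tfrac{1}{|W_J|}\,\#\{g\in W: g\,V^{W_J}\subseteq V^{w_0}\}$, followed by its reconciliation with $\prod_{d_i\text{ even}}(d_i/2)\big/\prod_{e_j\text{ even}}(e_j/2)$.
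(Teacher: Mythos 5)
Your reformulation of both sides is sound and coincides with the paper's starting point: the left side is $\varepsilon(x_J)$ (your $y_J$ is the paper's $x_J$; note the paper reserves $y_I$ for the sum over elements with ascent set exactly $I$), the right side is $\mathrm{Ind}_{W_J}^{W}1_{W_J}(w_0)=\Phi(x_J)(w_0)$, and both $\varepsilon|_{\sum(W)}$ and evaluation of $\Phi$ at $w_0$ are algebra homomorphisms on the descent algebra. But the proposal then stops short of a proof: you explicitly defer the identification of these two homomorphisms to a ``case-free computation of $\pi_J(w_0)$ in terms of the even degrees, followed by its reconciliation with $\prod_{d_i\,\mathrm{even}}(d_i/2)/\prod_{e_j\,\mathrm{even}}(e_j/2)$,'' and that reconciliation is never supplied. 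This missing step is not a routine verification --- it is essentially the entire content of Eng's theorem (it is, in substance, how Reiner--Stanton--White prove it via Springer's theory of regular elements), so the framework as written reduces the theorem to itself. The alternative route through the linear characters $\lambda_{[K]}$ of $\sum(W)$ modulo its radical has the same defect: deciding which $\lambda_{[K]}$ equals $\varepsilon|_{\sum(W)}$ requires comparing $\varepsilon(x_J)=X_J(-1)$ with the values $\lambda_{[K]}(x_J)$, which is again the theorem.

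The idea you are missing --- and your remark that ``no soft argument does it'' is precisely where you part ways with the paper --- is that $w_0$ is itself an element of the descent algebra: $w_0=y_{\emptyset}=\sum_{I\subseteq S}(-1)^{|I|}x_I$ is the unique element with empty ascent set, and Solomon showed $\Phi(w_0)=\varepsilon$. The Blessenohl--Hohlweg--Schocker symmetry $\Phi(x)(y)=\Phi(y)(x)$ for $x,y\in\sum(W)$ then gives in one line $\varepsilon(x_J)=\Phi(w_0)(x_J)=\Phi(x_J)(w_0)=\mathrm{Ind}_{W_J}^{W}1_{W_J}(w_0)$, which is the theorem. So a soft argument does exist, but it runs through the symmetry of the descent algebra rather than through Poincar\'e polynomials and degrees; your correct preliminary observations (the involution $x\mapsto w_0xw_{0,J}$, the factorization $X_J(q)=W(q)/W_J(q)$, the evaluation at $q=-1$) end up not being needed. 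To repair your write-up along the paper's lines, replace the entire second half by an appeal to the symmetry theorem applied to the pair $(w_0,x_J)$.
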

In \cite {br8}, L. Tan showed the relation $D_{J}=\sum_{w \in X_{J}} (-1)^{l(w)}$. O. Eng proved this theorem by using case by case techniques. In \cite{br5}, V. Reiner gave a case-free geometric proof of Theorem \ref{Eng}. Then in \cite{br6}, V. Reiner, D. Stanton, D. White presented the first case-free algebraic proof of the theorem above. To give another proof of Theorem \ref{Eng} algebraically, we use a different method than that of V. Reiner, D. Stanton, D. White. Our approach depends more on the structure of the descent algebra of a finite Coxeter group introduced by L. Solomon in \cite{br7}. Our proof is new and avoids case by case considerations. Set $x_{J}=\sum_{w \in X_{J}}w$ for any subset $J$ of $S$. Then $\{ x_{J}~|~ J \subseteq S \}$ forms a basis for a subalgebra of the group algebra $\mathbb{Q}W$ called \textit{the descent algebra} of $W$. We denote by $\sum(W)$ the descent algebra corresponding to $W$. The ascent set of $w$ is defined by
$$\mathcal{R}(w)=\{s \in S : l(ws)>l(w)\}.$$
For any subset $I$ of $S$, put $Y_{I}=\{w \in W : \mathcal{R}(w)=I\}$. We consider the element $y_{I}=\sum_{w \in Y_{I}}w$ in $\mathbb{Q}W$. Then
$$x_{I}=\sum_{I \subseteq J}y_{J}$$
and by M\"obius inversion formula
$$y_{I}=\sum_{I \subseteq J}(-1)^{|J-I|}x_{J}.$$
Thus the set $\{y_{I} : I \subseteq S\}$ is a basis of $\sum(W)$, see \cite {br7}.
In \cite{br7}, L. Solomon also defined an algebra map from $\sum(W)$ to $\mathbb{Q}\textrm{Irr}W$ as follows:
$$\Phi : \sum(W) \rightarrow \mathbb{Q}\textrm{Irr}(W), x_{J} \mapsto Ind_{W_{J}}^{W}1_{W_{J}},$$
where $\mathbb{Q}\textrm{Irr}W$, $1_{W_{J}}$ and $Ind_{W_{J}}^{W}1_{W_{J}}$ denote the algebra generated by irreducible characters of $W$, the trivial character of $W_{J}$ and the permutation character of $W_{J}$ in $W$, respectively. Taking into account the sign character of $W$, which is defined as $\varepsilon : W \rightarrow \mathbb{N},~\varepsilon(w)=(-1)^{l(w)}$ (see \cite{br3}), it is well-known from \cite{br7} that $y_{\emptyset}=w_{0} \in \sum(W)$ and
\begin{equation}\label{epsilon}
\Phi(w_{0})=\varepsilon.
\end{equation}
The sign character $\varepsilon$ of $W$ is actually irreducible and equals to the Steinberg character of $W$, which is given by the formula $St_{W}=\sum_{J \subseteq S}(-1)^{|J|}\Phi(x_{J})$ due to \cite{br11}.
In [\cite{br1}, Main Theorem], D. Blessenohl, C. Hohlweg, M. Schocker showed the symmetry property for the descent algebra $\sum(W)$, that is,
\begin{equation}\label{symmetry}
    \Phi(x)(y)=\Phi(y)(x)
\end{equation}
for all $x, y \in \sum(W)$.

Let $I, J \subseteq S$. We write $I \sim_{W} J$ if $I$ and $J$ are $W$-conjugate, that is, there is a $w \in W$ such that $I=wJw^{-1}$ .

\section{\textit{Proof of Theorem 1.1}}

When we extend linearly the sign character $\varepsilon$ of $W$ to the group algebra $\mathbb{Q}W$ and use the equation (\ref{epsilon}), (\ref{symmetry}), then we conclude that
\begin{align*}
\sum_{w \in X_{J}} (-1)^{l(w)}&=\sum_{w \in X_{J}}\varepsilon(w)=\varepsilon(\sum_{w \in X_{J}}w) \\
&=\varepsilon(x_{J})=\Phi(w_{0})(x_{J})=\Phi(x_{J})(w_{0})\\
&=Ind_{W_{J}}^{W}1_{W_{J}}(w_{0})=|\{ w \in X_{J}: w_{0}wW_{J}=wW_{J} \}|.
\end{align*}
Therefore, we obtain the equation (\ref{-1 phnmn}) and so we complete a case-free algebraic proof of the Theorem \ref{Eng}. Consequently, we can write $D_{J}=\Phi(x_{J})(w_{0})$.
\begin{cor}
Let $I, J \subseteq S$. If $I \sim_{W} J$, then we have $D_{I}=D_{J}$.
\end{cor}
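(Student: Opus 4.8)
The plan is to deduce the corollary directly from the identity $D_{J}=\Phi(x_{J})(w_{0})$ recorded just above, by showing that the class function $\Phi(x_{J})$ depends only on the $W$-conjugacy class of $J$. Since both $D_{I}$ and $D_{J}$ arise by evaluating such class functions at the single element $w_{0}$, equality of the class functions will at once give $D_{I}=D_{J}$.

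First I would translate the hypothesis $I \sim_{W} J$ into a statement about the parabolic subgroups. By definition there is a $w \in W$ with $I=wJw^{-1}$, and since $W_{I}$ and $W_{J}$ are generated by $I$ and $J$ respectively, conjugation by $w$ yields $W_{I}=\langle wJw^{-1}\rangle = w\langle J\rangle w^{-1}=wW_{J}w^{-1}$. Hence the standard parabolic subgroups $W_{I}$ and $W_{J}$ are conjugate in $W$. Next I would invoke the elementary but essential representation-theoretic fact that the permutation character induced from a subgroup depends only on the conjugacy class of that subgroup: if $W_{I}=wW_{J}w^{-1}$, then $Ind_{W_{I}}^{W}1_{W_{I}}=Ind_{W_{J}}^{W}1_{W_{J}}$ as characters of $W$. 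Recalling that Solomon's map sends $\Phi(x_{J})=Ind_{W_{J}}^{W}1_{W_{J}}$, this gives $\Phi(x_{I})=\Phi(x_{J})$, and evaluating both sides at $w_{0}$ produces $D_{I}=\Phi(x_{I})(w_{0})=\Phi(x_{J})(w_{0})=D_{J}$.

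The argument involves no genuine obstacle; the only points requiring care are the passage from conjugacy of the generating sets to conjugacy of the parabolic subgroups, and the correct application of the induced-character lemma. One could alternatively attempt to build a parity-preserving bijection between $X_{I}$ and $X_{J}$ directly, but since conjugation does not preserve the length function on $W$ this route is considerably more delicate; the character-theoretic argument is both cleaner and more in keeping with the methods used in the proof of Theorem \ref{Eng}.
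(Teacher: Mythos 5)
Your proof is correct, and it reaches the same pivotal equality $\Phi(x_{I})=\Phi(x_{J})$ as the paper before evaluating at $w_{0}$, but it justifies that equality by a genuinely different and more elementary route. The paper invokes Solomon's structural description of the radical, $\mathrm{Ker}\,\Phi=\langle x_{I}-x_{J} : I \sim_{W} J\rangle$, and reads off $\Phi(x_{I})=\Phi(x_{J})$ from the containment $x_{I}-x_{J}\in \mathrm{Ker}\,\Phi$. You instead verify that containment from scratch: $I=wJw^{-1}$ gives $W_{I}=wW_{J}w^{-1}$, and induction of the trivial character from conjugate subgroups yields equal characters, so $Ind_{W_{I}}^{W}1_{W_{I}}=Ind_{W_{J}}^{W}1_{W_{J}}$, i.e.\ $\Phi(x_{I})=\Phi(x_{J})$ directly from the definition of $\Phi$. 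What you gain is self-containment --- you use only the ``easy direction'' of Solomon's theorem (that these differences lie in the kernel), not the full identification of the radical, and that easy direction is exactly the standard induced-character fact you cite. What the paper's phrasing buys is a pointer to the larger structure theory of $\sum(W)$, which it reuses elsewhere. One small correction to your closing remark: the bijective route is not merely ``delicate'' but is actually carried out in the paper's follow-up comment via Fleischmann's theorem that $X_{I}$ and $X_{J}$ are $W$-pointwise conjugate; since $\varepsilon$ is a class function, pointwise conjugacy preserves each sign $(-1)^{l(w)}$ even though conjugation does not preserve length, so that route also succeeds --- at the cost of a nontrivial external theorem.
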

\begin{proof}
The radical of $\sum(W)$ is $Ker\Phi=<x_{I}-x_{J} : I \sim_{W} J>$ from \cite{br7}. Since $\Phi$ is an algebra morphism, then we get $\Phi(x_{I})=\Phi(x_{J})$. By considering the proof of Theorem \ref{Eng} given above, one has
\begin{equation*}
    D_{I}=\Phi(x_{I})(w_{0})=\Phi(x_{J})(w_{0})=D_{J}.
\end{equation*}
\end{proof}
Another perspective for the proof of Corollary 2.1 is that when $I$ conjugate to $J$ under $W$, the corresponding sets of the distinguished coset representatives $X_{I}$ and $X_{J}$ are $W$-pointwise conjugate to each other due to \cite{br4}. Therefore, we have $D_{I}=D_{J}$.
\begin{cor}\label{sum}
For any finite Coxeter system $(W,S)$, we have
$$\sum_{J \subseteq S}(-1)^{|J|}D_{J}=(-1)^{|T|},$$
where $T$ denotes the set of reflections of $W$, that is, $T=\bigcup_{w \in W}wSw^{-1}$.
\end{cor}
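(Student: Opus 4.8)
The plan is to reduce the alternating sum directly to a single evaluation of the Steinberg character at the longest element, leaning on the identity $D_J=\Phi(x_J)(w_0)$ established at the end of the proof of Theorem \ref{Eng}. First I would substitute this expression for $D_J$ into the left-hand side, obtaining
$$\sum_{J \subseteq S}(-1)^{|J|}D_{J}=\sum_{J \subseteq S}(-1)^{|J|}\Phi(x_{J})(w_{0}).$$
Since evaluation of a virtual character at the fixed element $w_{0}$ is $\mathbb{Q}$-linear, I can pull the summation inside and recognise the resulting linear combination of permutation characters as precisely the Steinberg character, via the formula $St_{W}=\sum_{J \subseteq S}(-1)^{|J|}\Phi(x_{J})$ quoted from \cite{br11}. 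This gives
$$\sum_{J \subseteq S}(-1)^{|J|}D_{J}=\Big(\sum_{J \subseteq S}(-1)^{|J|}\Phi(x_{J})\Big)(w_{0})=St_{W}(w_{0}).$$

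The second step is to invoke the fact, already recorded in the excerpt, that the Steinberg character of a finite Coxeter group coincides with the sign character, $St_{W}=\varepsilon$. Evaluating at $w_{0}$ and unwinding the definition $\varepsilon(w)=(-1)^{l(w)}$ then yields $St_{W}(w_{0})=(-1)^{l(w_{0})}$.

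The last step, and the only one of a genuinely geometric flavour, is to identify $l(w_{0})$ with $|T|$. Here I would use the standard characterisation of the longest element: $w_{0}$ sends every positive root to a negative root, so $l(w_{0})$ equals the number of positive roots; since the reflections in $T$ are in bijection with the positive roots, we get $l(w_{0})=|T|$. Chaining the three steps produces $\sum_{J \subseteq S}(-1)^{|J|}D_{J}=(-1)^{|T|}$, as claimed. I do not expect a serious obstacle, since the substantive work is carried by Theorem \ref{Eng}; the only point requiring care is applying the Steinberg formula with the correct signs and justifying the identity $l(w_{0})=|T|$ cleanly, which I would state as the standard equality between the length of $w_{0}$ and the number of reflections.
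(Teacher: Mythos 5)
Your proof is correct and is essentially the paper's argument: the Steinberg identity $St_{W}=\sum_{J\subseteq S}(-1)^{|J|}\Phi(x_{J})=\varepsilon$ that you evaluate at $w_{0}$ is exactly the image under $\Phi$ of the expansion $w_{0}=\sum_{J\subseteq S}(-1)^{|J|}x_{J}$ to which the paper applies the linearly extended sign character, so the two computations agree term by term via $D_{J}=\Phi(x_{J})(w_{0})=\varepsilon(x_{J})$. The only presentational difference is that you work on the character side in $\mathbb{Q}\textrm{Irr}(W)$ while the paper works inside $\sum(W)$; both reduce the claim to $\varepsilon(w_{0})=(-1)^{l(w_{0})}=(-1)^{|T|}$.
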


\begin{proof}
To prove Corollary \ref{sum} observe that image of the longest element $w_{0}$ under the sign character $\varepsilon$ of $W$ is $\varepsilon(w_{0})=(-1)^{l(w_{0})}=(-1)^{|T|}$. Since $w_{0}=\sum_{J \subseteq S}(-1)^{|J|}x_{J}$, we have
$$\varepsilon(w_{0})=\sum_{J \subseteq S}(-1)^{|J|}\varepsilon(x_{J})=\sum_{J \subseteq S}(-1)^{|J|}\Phi(x_{J})(w_{0})=\sum_{J \subseteq S}(-1)^{|J|}D_{J},$$
using the fact that $\Phi(x_{J})(w_{0})=D_{J}$.
\end{proof}
One can observe that Corollary  \ref{sum} is actually a special case of Proposition 1.11 given in \cite{br3}.
\begin{ex}
We consider the Coxeter system $(W_{n},S_{n})$ of type $B_{n}$. In [\cite{br8}, Theorem B], L. Tan proved that $D_{J}=0$ for every proper parabolic subgroup of $W_{n}$. This statement is also seen from the fact that there is no $w \in X_{J}$ such that $ww_{0}w^{-1} \in W_{J}$ for every proper subset $J$ of $S_{n}$. Therefore,
$$\sum_{J \subseteq S_{n}}(-1)^{|J|}D_{J}=(-1)^{n}.$$
\end{ex}
The following result due to L. Tan \cite{br8} is easily derived from Lemma 2.1 given in \cite{br0} and Theorem \ref{Eng}.
\begin{lem}[Tan \cite{br8}]
Suppose $I \subseteq J \subseteq S$. Then $W_{J}=X_{I}^{J}W_{I}$ and $D_{SI}=D_{SJ}D_{JI}$, where $X_{I}^{J}=X_{I} \cap W_{J}$.
\end{lem}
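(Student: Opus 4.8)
The plan is to deduce both assertions from the standard theory of minimal-length coset representatives, using Lemma 2.1 of \cite{br0} to supply a length-additive factorization and Theorem \ref{Eng} to read off the resulting invariants.

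First I would settle the decomposition $W_{J}=X_{I}^{J}W_{I}$. Since $I \subseteq J$, the subgroup $W_{I}$ is a standard parabolic subgroup of the Coxeter system $(W_{J},J)$, whose length function is the restriction of $l$. For $w \in W_{J}$ the condition $w \in X_{I}$, namely $l(ws)>l(w)$ for all $s \in I$, is exactly the condition that $w$ be the minimal-length representative of the coset $wW_{I}$ inside $W_{J}$. Hence $X_{I}^{J}=X_{I}\cap W_{J}$ is precisely the distinguished left transversal of $W_{I}$ in $W_{J}$, giving the disjoint union $W_{J}=\bigsqcup_{u \in X_{I}^{J}}uW_{I}$, and in particular $W_{J}=X_{I}^{J}W_{I}$ with $l(uv)=l(u)+l(v)$ for $u \in X_{I}^{J}$ and $v \in W_{I}$.

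Next I would invoke the transitivity of distinguished representatives, which I expect to be the content of Lemma 2.1 of \cite{br0}: for $I \subseteq J$ every $w \in X_{I}$ factors uniquely as $w=xu$ with $x \in X_{J}$ and $u \in X_{I}^{J}$, and this factorization is length-additive, $l(w)=l(x)+l(u)$; equivalently $X_{I}=X_{J}X_{I}^{J}$. This is the only nonelementary input, and with it the multiplicative formula is immediate. Writing $D_{SI}=\sum_{w \in X_{I}}(-1)^{l(w)}$ and substituting $w=xu$ splits the sum as a product,
\begin{equation*}
D_{SI}=\sum_{x \in X_{J}}\sum_{u \in X_{I}^{J}}(-1)^{l(x)+l(u)}=\Big(\sum_{x \in X_{J}}(-1)^{l(x)}\Big)\Big(\sum_{u \in X_{I}^{J}}(-1)^{l(u)}\Big)=D_{SJ}\,D_{JI},
\end{equation*}
where the first factor is $D_{SJ}$ and the second is the relative invariant $D_{JI}$ of the pair $(W_{J},W_{I})$.

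Finally, Theorem \ref{Eng} supplies the interpretation that makes the identity an instance of the $-1$ phenomenon in each factor: applied to $(W,S)$ it gives $D_{SI}$ and $D_{SJ}$ as the numbers of representatives fixed modulo $W_{I}$ and $W_{J}$ by $w_{0}$, and applied to the finite Coxeter system $(W_{J},J)$ it identifies $\sum_{u \in X_{I}^{J}}(-1)^{l(u)}$ with $D_{JI}=|\{u \in X_{I}^{J}:w_{0}^{J}uW_{I}=uW_{I}\}|$, where $w_{0}^{J}$ is the longest element of $W_{J}$. The only obstacle I anticipate is establishing (or correctly citing) the length-additive factorization $X_{I}=X_{J}X_{I}^{J}$; once it is in hand the proof is a one-line splitting of the signed sum, and the verification that the length function of $(W_{J},J)$ agrees with $l$ (which holds because $J \subseteq S$) is what lets Theorem \ref{Eng} apply verbatim in the inner factor.
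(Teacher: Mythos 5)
Your proof is correct, but it takes a genuinely more elementary route than the paper's. Both arguments rest on Lemma 2.1 of \cite{br0}; you use it at the level of sets, namely the unique length-additive factorization $X_{I}=X_{J}X_{I}^{J}$, and then split the signed sum $\sum_{w \in X_{I}}(-1)^{l(w)}$ directly into the product $D_{SJ}D_{JI}$, so that Theorem \ref{Eng} enters only at the very end to supply the fixed-point interpretation of each factor and is not actually needed for the identity itself. The paper instead uses the lemma at the level of the group algebra, $x_{I}=x_{J}x_{I}^{J}$, applies the Solomon homomorphism to obtain $\Phi(x_{I})=\Phi(x_{J})\Phi(x_{I}^{J})$, and then evaluates at $w_{0}$ via the identification $D_{J}=\Phi(x_{J})(w_{0})$ established in its proof of Theorem \ref{Eng}. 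Your version is self-contained and makes the source of the multiplicativity (length-additivity of the factorization) completely transparent, and it is careful to interpret the inner factor inside the Coxeter system $(W_{J},J)$ with its own longest element $w_{0}^{J}$; the paper's version is shorter and stays within the descent-algebra and character-theoretic framework that is the point of the article, though it leaves implicit the step of matching the evaluation of $\Phi(x_{I}^{J})$ with the relative invariant $D_{JI}$, which your argument handles explicitly.
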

It is well-known from Lemma 2.1 in \cite{br0} that for $I \subseteq J \subseteq S$, we have $x_{I}=x_{J}x_{I}^{J}$. Since $\Phi$ is an algebra morphism, then we get
$$\Phi(x_{I})=\Phi(x_{J}) \Phi(x_{I}^{J}).$$
Taking Theorem \ref{Eng} into consideration we conclude that the relation $D_{SI}=D_{SJ}D_{JI}$ in the sense of \cite{br8}.
\begin{cor}
For any finite Coxeter group $W$, we have
$$\sum_{w \in W}(-1)^{l(w)}=0.$$
\end{cor}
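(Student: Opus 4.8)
The plan is to obtain this corollary as the degenerate case $J = \emptyset$ of Theorem \ref{Eng}. First I would record that the parabolic subgroup attached to the empty set is trivial, $W_{\emptyset} = \{1\}$, so its distinguished left coset representatives exhaust the whole group, that is $X_{\emptyset} = W$. Hence the sum under consideration is exactly
$$\sum_{w \in W}(-1)^{l(w)} = \sum_{w \in X_{\emptyset}}(-1)^{l(w)} = D_{\emptyset},$$
and it remains only to show $D_{\emptyset} = 0$.

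Next I would invoke Theorem \ref{Eng} for $J = \emptyset$. Its right-hand side counts the $w \in X_{\emptyset} = W$ satisfying $w_{0}wW_{\emptyset} = wW_{\emptyset}$; since $W_{\emptyset} = \{1\}$, this condition simplifies to $w_{0}w = w$, i.e. $w_{0} = 1$. Because $w_{0}$ is the longest element of $W$, this is impossible whenever $W$ is nontrivial, so the counted set is empty and $D_{\emptyset} = 0$, which is the assertion.

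As a cross-check through the representation-theoretic side of the argument, I would note that $\Phi(x_{\emptyset}) = Ind_{W_{\emptyset}}^{W}1_{W_{\emptyset}}$ is the character of the regular representation of $W$, which vanishes at every element distinct from the identity. Combined with the identity $D_{\emptyset} = \Phi(x_{\emptyset})(w_{0})$ extracted from the proof of Theorem \ref{Eng}, this again yields $D_{\emptyset} = 0$ as soon as $w_{0} \neq 1$. Equivalently, $\sum_{w \in W}(-1)^{l(w)} = \varepsilon(x_{\emptyset})$ is the total of the irreducible sign character over $W$, which is $0$ by the orthogonality relations since $\varepsilon \neq 1_{W}$.

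The single point demanding care is the standing hypothesis that $W$ is nontrivial: the whole argument rests on $w_{0} \neq 1$, and the statement genuinely fails for the one-element group, where the sum equals $1$. I therefore expect the result to be understood under the assumption $S \neq \emptyset$, and I do not anticipate any further obstacle.
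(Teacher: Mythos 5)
Your proposal is correct and follows essentially the same route as the paper: the paper's own proof is precisely your ``cross-check,'' namely $\sum_{w\in W}(-1)^{l(w)}=\varepsilon(x_{\emptyset})=\Phi(w_{0})(x_{\emptyset})=\Phi(x_{\emptyset})(w_{0})=0$, using that $\Phi(x_{\emptyset})$ is the regular character and that it vanishes at $w_{0}\neq 1$, which is just the $J=\emptyset$ specialization of Theorem \ref{Eng} that you carry out combinatorially. Your remark that the statement presupposes $W$ nontrivial (i.e. $S\neq\emptyset$), since the sum equals $1$ for the one-element group, is a caveat the paper leaves implicit.
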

\begin{proof}
Since the image of $x_{\emptyset}$ under $\Phi$ gives the regular character of $W$ and the equation (\ref{symmetry}), then we conclude
$$\sum_{w \in W}(-1)^{l(w)}=\varepsilon(\sum_{w \in W}w)=\varepsilon(x_{\emptyset})=\phi(w_{0})(x_{\emptyset})=\phi(x_{\emptyset})(w_{0})=0.$$.
\end{proof}
\vspace*{0.3cm}

\end{document}